\providecommand{\abs}[1]{\left\lvert#1\right\rvert}
\newtheorem*{theorem*}{Theorem}
\newtheorem{theorem}{Theorem}[section]
\newtheorem{proposition}[theorem]{Proposition}
\newtheorem{lemma}[theorem]{Lemma}
\newtheorem{conjecture}[theorem]{Conjecture}
\theoremstyle{definition}
\newtheorem{definition}[theorem]{Definition}
\theoremstyle{remark}
\providecommand{\Nn}{\mathbb{N}}
\providecommand{\Zz}{\mathbb{Z}}
\title{}
\author{}
\begin{document}
\title{Topological Factors of Rank-One Subshifts}
\author{Su Gao}
\address{Department of Mathematics, University of North Texas, 1155 Union Circle \#311430, Denton, TX 76203, USA}
\email{sgao@unt.edu}
\author{Caleb Ziegler}
\address{2150 Vine Street, Denver, CO 80205, USA}
\email{zieglercaleb@gmail.com}
\email{caleb.ziegler@unt.edu}

\date{\today}
\subjclass[2010]{Primary 37B10, 37B20}
\keywords{rank-one subshift, factor, isomorphism}
\thanks{The first author acknowledges the US NSF grants DMS-1201290 and DMS-1800323 for the support of his research. Some results in this paper appeared as a part of the second author's PhD dissertation submitted to the University of North Texas in 2018.}

\begin{abstract}
We study topological factors of rank-one subshifts and prove that those factors that are themselves subshifts are either finite or isomorphic to the original rank-one subshifts. Thus, we completely characterize the subshift factors of rank-one subshifts.
\end{abstract}
\maketitle 
\thispagestyle{empty}

\section{Introduction and Definitions}
Rank-one subshifts have been studied in \cite{GH} and \cite{GZ} as topological dynamical systems. In \cite{GH} the topological isomorphism between rank-one subshifts is completely characterized in terms of their cutting and spacer parameters. In \cite{GZ} the current authors completely characterized the maximal equicontinuous factors of rank-one subshifts. In this paper we study the topological factors of rank-one subshifts which are themselves subshifts. We provide a complete characterization as follows.

\begin{theorem*}
Let $(X, T)$ be a rank-one subshift and $(Y, S)$ be a subshift. Suppose that $(Y, S)$ is a topological factor of $(X, T)$. Then either $Y$ is finite or $(X, T)$ is isomorphic to $(Y, S)$.
\end{theorem*}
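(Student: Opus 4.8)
The plan is to begin with the Curtis--Hedlund--Lyndon theorem, which applies because both $(X,T)$ and $(Y,S)$ are subshifts: any continuous shift-commuting surjection $\pi\colon X\to Y$ is a sliding block code, so there is a radius $r$ and a local rule $\phi$ with $\pi(x)_k=\phi(x_{k-r},\dots,x_{k+r})$. Since a continuous shift-commuting \emph{bijection} between compact subshifts is automatically a conjugacy, the entire problem reduces to the combinatorial question of deciding, from $\phi$ and the generating words $v_n$ of $X$, whether $\pi$ is injective; the goal is to prove that non-injectivity forces $Y$ to be finite.

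Next I would exploit the hierarchical rigidity of the rank-one structure. Writing $v_{n+1}=v_n 1^{a_{n,0}} v_n 1^{a_{n,1}}\cdots v_n 1^{a_{n,q_n-1}}$, every point of $X$ admits for each level $n$ an essentially canonical parsing into copies of $v_n$ separated by spacer runs of $1$'s, and this recognizability means the positions of the $v_n$-occurrences are determined by the point. Choosing $n$ large enough that $|v_n|\gg r$, the key observation is that the image under $\pi$ of the \emph{interior} of any occurrence of $v_n$, namely the positions at distance more than $r$ from both endpoints, is a fixed word $u_n$ that is independent of the surrounding context, because the window of $\phi$ never reaches past the boundary. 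Thus $\pi$ transports the level-$n$ block structure of $X$ to a block structure of $Y$ built from the words $u_n$, with all defects confined to boundary neighborhoods of size $r$ and to the spacer runs.

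The argument then splits according to whether this induced map on blocks is eventually faithful. If, for all large $n$, the interior rule lets one recover the letters of $v_n$ from $u_n$, then one can assemble an inverse sliding block code and conclude that $\pi$ is injective, whence $(X,T)\cong(Y,S)$; here one may also appeal to the isomorphism classification of \cite{GH} to match the parameters. Conversely, if the interior rule collapses information at infinitely many levels, I would argue that the $u_n$ become eventually periodic with a single common period, so that the complexity satisfies $p_Y(m)\le p_X(m+2r)$ together with a uniform bound coming from the self-similar way $v_{n+1}$ is assembled from $v_n$; a collapse at one scale then propagates to all larger scales, $p_Y$ stays bounded, and the Morse--Hedlund theorem forces $Y$ to be finite.

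The main obstacle I anticipate is controlling the spacer runs and the boundary defects precisely enough to make this dichotomy clean. The runs $1^{a_{n,i}}$ may be long and irregular, and on a long constant block $1^a$ the local rule is eventually periodic but carries transients of length $r$ at each end; separating these transients from the genuine block images $u_n$, and upgrading "collapse of interior information" from a merely local periodicity to a single global period of $Y$, is the delicate core of the proof. I expect this is exactly where the rank-one hypothesis does the real work, through the recognizability of the $v_n$-parsing and the unbounded growth of $|v_n|$ against the fixed radius $r$.
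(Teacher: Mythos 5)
Your opening moves coincide with the paper's: Curtis--Hedlund--Lyndon gives the sliding block code, compactness reduces the theorem to proving injectivity of $\varphi$, and the image of each $v_n$ under the block code is a fixed word $u_n$ (the paper's $\alpha_n$) independent of context. But the core of your argument is a dichotomy --- ``the interior rule is eventually faithful'' versus ``information collapses'' --- in which neither branch is carried out, and the dichotomy itself is not the right one. Injectivity of $\varphi$ is not governed by whether the letters of $v_n$ can be recovered from $u_n$: the genuine enemy is \emph{misalignment}. If $\varphi(x)=\varphi(x')$ with $x\neq x'$, the $v_n$-parsings of $x$ and $x'$ are offset by some $k'-k$ with $0<k'-k\leq \frac{1}{2}(\mbox{lh}(v_n)+B)$, so the single image point contains two occurrences of $\alpha_n$ overlapping with offset $k'-k$. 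The paper's key steps, absent from your sketch, are: (i) one may slide along the parsings to arrange that the spacers following the two chosen occurrences have different lengths $a\neq a'$ (otherwise $x'$ is a translate of $x$ and the common image is periodic, forcing $Y$ finite); (ii) a Fine--Wilf style lemma (Lemma 3.2: a word occurring at positions $0$ and $p$ in a long string has period $p$) applied twice --- to the occurrences offset by $k'-k$ and to the next occurrences offset by $k'-k+a'-a$ --- produces two periods of $\alpha_n$ differing by at most $B$, and combining them yields a period $p\leq B$ of a long prefix of $\alpha_n$, whence $Y$ is periodic and finite. This is exactly where the bounded-spacer hypothesis works. Your substitute, the complexity inequality $p_Y(m)\leq p_X(m+2r)$ plus Morse--Hedlund, cannot close the argument: that inequality holds for every factor and never by itself bounds $p_Y$, and your claim that ``a collapse at one scale propagates to all larger scales'' is precisely the statement that needs proof; no mechanism for it is given.

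Second gap: you treat the problem uniformly in $n$, but the proof must split into the bounded and unbounded spacer cases, and the unbounded case needs a genuinely different argument, since there is no bound $B$ to feed into the periodicity step. There the paper uses the unbounded runs of $1$'s positively: the image of a long spacer $1^l$ is a long run of $1$'s flanked by recognizable transition words, so every sufficiently long spacer of $x$ is visible in $\varphi(x)$ together with its exact position and length; a separate lemma (Lemma 4.2) then shows that the positions and lengths of all long spacers determine the point, by playing the periodic-mod-$q_1$ classes of short spacers against the single aperiodic class of long ones. One must also handle separately the points $1^{\mathbb{Z}}$, $1^{-\mathbb{N}}V$ and $V^*1^{\mathbb{N}}$ containing an infinite run of $1$'s; your remarks about ``transients'' and ``boundary defects'' gesture at this but contain no argument. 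In short, you have identified the right setup and even the right obstacle --- upgrading local collapse to a single global period, and controlling the spacers against the fixed radius $r$ --- but that obstacle is the entire content of the paper's proof, and your proposal does not overcome it.
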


We conjecture that the theorem is still true if we drop the assumption that the factor is itself a subshift. 

In this paper, by a \textit{subshift} we mean a topological dynamical system $(X, T)$ where $X$ is a closed invariant subset of some $b^\mathbb{Z}$ for a positive integer $b>1$, and $T$ is the shift map defined by $(Tx)(k)=x(k+1)$. Since the shift map is uniformly defined, we sometimes suppress mentioning the shift map and refer to the subshift as $X$.

Fixing a sequence $(q_n)_{n\geq  0}$ of integers $>1$ and a sequence $(a_{n,i})_{n\geq 0, 1\leq i<q_n}$ of non-negative integers, we define a \textit{rank-one sequence} $(v_n)_{n\geq 0}$ of binary words by setting $v_0=0$ and 
\begin{equation}\label{eqn:vn} v_{n+1}=v_n1^{a_{n,1}}v_n\cdots v_n1^{a_{n, q_n-1}}v_n. 
\end{equation}
Note that each $v_n$ is a word that starts and ends with $0$, and each $v_n$ is an initial segment of $v_{n+1}$. This allows us to define a \textit{infinite rank-one word} $V=\lim_{n\to\infty} v_n$ and a \textit{rank-one subshift} 
$$ X_V=\{ x\in 2^\mathbb{Z}\,:\, \mbox{every finite subword of $x$ is a subword of $V$}\}. $$
The sequence $(q_n)$ is called the \textit{cutting parameter} and the doubly-indexed sequence $(a_{n,i})$ is called the \textit{spacer parameter} of the rank-one subshift $(X_V, T)$.

When the spacer parameter of a rank-one subshift $X$ is bounded, $X$ is a minimal dynamical system, that is, for all $x\in X$, the orbit $\{T^kx\,:\, k\in\mathbb{Z}\}$ is dense in $X$. When the spacer parameter is unbounded, $X$ will contain a unique fixed point $1^\mathbb{Z}$, that is, the constant 1 element. In this case, for every $x\in X-\{1^\mathbb{Z}\}$, the orbit of $x$ is dense in $X$. 

Our proof of the main theorem will be split into two cases, according to whether the spacer parameter is bounded. The proofs of the two cases will be presented in Sections \ref{bdd} and \ref{unbdd}. In the case of unbounded spacer parameter, we show a slightly stronger statement that either the factor is trivial (one-element system) or isomorphic to the original rank-one subshift.

\section{Preliminaries}
If $(v_n)$ is a rank-one sequence, then any subsequence of $(v_n)$ that starts with $v_0=0$ as the first term is also a rank-one sequence and gives rise to the same rank-one subshift. This is because, given any $m>n$, one can write $v_m$ in the format of (\ref{eqn:vn}) in terms of $v_n$ with appropriately modified cutting and spacer parameters. We call this procedure of extracting subsequences of a rank-one sequence \textit{telescoping}. It is clear that telescoping changes the cutting and spacer parameters but does not change the boundedness of the spacer parameter. 

We will fix some notation to use in the rest of the paper. If $i\leq j$ are integers we let $[i,j]$ denote the set of integers in between (and including) $i$ and $j$. For a finite word $\alpha$ we let $\mbox{lh}(\alpha)$ denote the length of $\alpha$, and think of $\alpha$ as a function with domain $[0,\mbox{lh}(\alpha)-1]$. If $0\leq i\leq j<\mbox{lh}(\alpha)$, we let $\alpha[i,j]$ denote the word $\beta$ of length $j-i+1$ where $\beta(k)=\alpha(k+i)$ for $k\in[0, j-i]$.

Let $(X, T)$ be a subshift of $2^\mathbb{Z}$ and $(Y, S)$ be a subshift of $b^{\mathbb{Z}}$ for some integer $b\geq 2$. Assume $\varphi: (X, T)\to (Y, S)$ is a factor map, that is, $\varphi$ is surjective and continuous, and for all $x\in X$,
$$ \varphi(Tx)=S\varphi(x). $$
It is well-known that, due to the compactness of $X$ and $Y$, there is a sliding block code inducing $\varphi$, that is, there exist integers $r\geq 0$ and $s$, and a partition of $2^{2r+1}$, $\{C_0, \dots, C_{b-1}\}$, such that for all $x\in X$, $k\in \mathbb{Z}$, and $j\in[0,b-1]$,  
$$ \varphi(x)(k)=j \iff x[k+s-r, k+s+r]\in C_j. $$
Intuitively, the block of $x$ in between (and including) coordinates $k+s-r$ and $k+s+r$ completely determines $\varphi(x)$ at coordinate $k$. Note that the $T^{s-r}$ is an isomorphism from $(X,T)$ to itself, and thus we may assume without loss of generality that $s=r$. Let $R=2r$. Then we have 
$$ \varphi(x)(k)=j \iff x[k, k+R]\in C_j. $$
In other words, $\varphi(x)$ at coordinate $k$ is completely determined by the block of $x$ of length $R+1$ in between coordinates $k$ and $k+R$.

\section{\label{bdd}Bounded Spacer Parameters}
Throughout this section we assume $(X, T)$ is a rank-one subshift of $2^\mathbb{Z}$ with bounded spacer parameter. In other words, $X=X_V$, where the infinite rank-one word $V$ is given by a rank-one sequence $(v_n)$, which in turn is determined by a cutting parameter $(q_n)$ and spacer parameter $(a_{n,i})$. Let $B$ be a bound for the spacer parameter. That is, for all $n\geq 0$ and $1\leq i<q_n$, we have $a_{n,i}\leq B$. 

Let $(Y, S)$ be a subshift of $b^{\mathbb{Z}}$ for some integer $b\geq 2$. Assume $\varphi$ is a factor map from $(X, T)$ onto $(Y, S)$. Let $\{C_0,\dots, C_{b-1}\}$ be the sliding block code corresponding to $\varphi$. Assume that the sliding block code has window size $R+1$, that is, for any $x\in X$, $k\in\mathbb{Z}$, and $j\in[0,b-1]$, 
$$ \varphi(x)=j\iff x[k,k+R]\in C_j. $$

By telescoping, we may assume $\mbox{lh}(v_1)\gg B+2R$. For each $n\geq 1$, let $\alpha_n$ be the block of length $\mbox{lh}(v_n)-R$ obtained from the application of the sliding block code to $v_n$, that is, for $k\in [0, \mbox{lh}(v_n)-R-1]$,
$$ \alpha_n(k)=j\iff v_n[k,k+R]\in C_j. $$
For each $a\leq B$, let $\beta_a$ be the block of length $a+R$ obtained from the application of the sliding block code to $v_1[\mbox{lh}(v_n)-R, \mbox{lh}(v_n)-1]1^av_1[0,R-1]$. Then we have that, for all $n\geq 1$,
$$ \alpha_{n+1}=\alpha_n\beta_{a_{n,1}}\alpha_n\dots \alpha_n\beta_{a_{n, q_n-1}}\alpha_n. $$

Now suppose $x\in X$. Then by Proposition 2.28 of \cite{GH}, for any $n\geq 1$, $x$ can be written uniquely as 
\begin{equation}\label{form}  \cdots\cdots v_n 1^{c_{-1}}v_n 1^{c_0} v_n 1^{c_1}v_n \cdots\cdots 
\end{equation}
where $c_i\geq 0$ for $i\in\mathbb{Z}$. Recall that these occurrences of $v_n$ are called expected occurrences of $v_n$.
It follows that $\varphi(x)$ can be written as
$$ \cdots \cdots \alpha_n \beta_{c_{-1}} \alpha_n\beta_{c_0} \alpha_n \beta_{c_1}\alpha_n\cdots\cdots$$

Since $(X, T)$ is minimal when it has bounded spacer parameter, it follows that $(Y, S)$ is also minimal. From now on we assume that $(Y, S)$ is not finite. Our objective is to show that the factor map $\varphi$ is indeed a topological isomorphism. In order to do this, it suffices to show that $\varphi$ is one-to-one, as $\varphi$ is a closed map given that $X$ and $Y$ are compact.

Toward a contradiction, we assume that there are distinct $x, x'\in X$ with $\varphi(x)=\varphi(x')$. Fix such $x, x'$. Since $x$ and $x'$ each has a unique decomposition in the form (\ref{form}), we have $k, k'\in \mathbb{Z}$ with $0<|k-k'|\leq\frac{1}{2}(\mbox{lh}(\alpha_n)+B+R)$ such that $x$ has an expected occurrence of $v_n$ at coordinate $k$ and $x'$ has an expected occurrence of $v_n$ at $k'$. To see this, note first that there must be $k\in\mathbb{Z}$ such that $x$ has an expected occurrence of $v_n$ at coordinate $k$ while $x'$ does not, since otherwise $x=x'$. Let $x$ have an expected occurrence of $v_n$ at coordinate $k$ while $x'$ does not. Suppose further that $x'$ does not have an expected occurrence of $v_n$ at $k'$ for any $k'\in [k-\frac{1}{2}(\mbox{lh}(\alpha_n)+B+R), k+\frac{1}{2}(\mbox{lh}(\alpha_n)+B+R)]$. Then for an interval of length $\mbox{lh}(\alpha_n)+R+B=\mbox{lh}(v_n)+B$ there is no expected occurrence of $v_n$. This violates (\ref{form}). 

Without loss of generality, we may assume $k<k'$. Moreover, the expected occurrence of $v_n$ in $x$ at coordinate $k$ is followed by a spacer $1^a$ and then followed by another expected occurrence of $v_n$. Similarly, the expected occurrence of $v_n$ in $x'$ at coordinate $k'$ is followed by a spacer $1^{a'}$ and then followed by another expected occurrence of $v_n$. Without loss of generality, we may assume $a\neq a'$. This is because, if $a=a'$, then instead of considering the expected occurrence of $v_n$ at $k$ in $x$ and that at $k'$ in $x'$, which we call the \textit{first} expected occurrences of $v_n$, we may consider the second expected occurrences of $v_n$ which follow the spacers specified above, and note that the difference of their beginning coordinates will still be $k'-k$. If the spacers following them are of different lengths, then we are done. Otherwise, we can repeat and consider the next expected occurrences of $v_n$ in $x$ and $x'$. By repeating, we may thus find expected occurrences of $v_n$ in $x$ and $x'$ respectively which satisfy the assumption that the spacers following them are of different lengths. If we fail to find such expected occurrences of $v_n$ to the right of the first expected occurrences of $v_n$, we may in a similar fashion search for expected occurrences that satisfy the assumption to the left of the first expected occurrences. If finally we fail to find such occurrences on both sides of the first expected occurrences, then we have that
$$ T^{k-k'}x=x' $$
and so
$$ \varphi(x)=\varphi(x')=\varphi(T^{k-k'}x)=T^{k-k'}(\varphi(x)). $$
This means that $\varphi(x)$ is periodic, and so $Y$ is finite, a contradiction.

For the rest of the proof, we fix $k<k'$ and $a\neq a'$ such that 
\begin{enumerate}
\item[(i)] $x$ has an expected occurrence of $v_n$ at $k$, followed by a spacer $1^a$, followed by another expected occurrence of $v_n$ at $k+\mbox{lh}(v_n)+a$;
\item[(ii)] $x'$ has an expected occurrence of $v_n$ at $k'$, followed by a spacer $1^{a'}$, followed by another expected occurrence of $v_n$ at $k'+\mbox{lh}(v_n)+a'$;
\item[(iii)] $k'-k\leq \frac{1}{2}(\mbox{lh}(v_n)+B)=\frac{1}{2}(\mbox{lh}(\alpha_n)+R+B)$.
\end{enumerate}

Let $y=\varphi(x)=\varphi(x')$. Then we have
\begin{enumerate}
\item[(i')] $y$ has an occurrence of $\alpha_n$ at $k$, followed by an occurrence of $\beta_a$, followed by another occurrence of $\alpha_n$ at $k+\mbox{lh}(\alpha_n)+R+a$;
\item[(ii')] $y$ has an occurrence of $\alpha_n$ at $k'$, followed by an occurrence of $\beta_{a'}$, followed by another occurrence of $\alpha_n$ at $k'+\mbox{lh}(\alpha_n)+R+a'$.
\end{enumerate}

Since $k'-k\leq \frac{1}{2}(\mbox{lh}(\alpha_n)+R+B)$, the two occurrences of $\alpha_n$ in $y$ which occur at $k$ and $k'$ overlap for at least $\frac{1}{2}(\mbox{lh}(\alpha_n)-R-B)$ coordinates. 

We use the following concept and general lemma.
\begin{definition} Let $\eta$ be a finite string and $0<p<\mbox{lh}(\eta)$. Suppose $\mbox{lh}(\eta)=lp+q$ where $0\leq q<p$. We say that $\eta$ has \textit{period} $p$ (or $p$ is a \textit{period} for $\eta$) if $(\eta[0,p-1])^l$ is an initial segment of $\eta$ and $\eta$ is an initial segment of $(\eta[0,p-1])^{l+1}$.
\end{definition}

\begin{lemma} \label{lem:offset}
Let $\eta$ be a finite word and $0<p<\mbox{lh}(\eta)$. Suppose $\eta$ occurs at both coordinates $0$ and $p$ in some sufficiently long string $\xi$. Then $p$ is a period for $\eta$.
\end{lemma}

\begin{proof}
Since $\eta$ occurs at $p$ in $\xi$, we have $\xi[p,2p-1]=\eta[0,p-1]=\xi[0,p-1]$. If $\mbox{lh}(\eta)<2p$, we have that $\eta$ is an initial segment of $\xi[0,2p-1]=(\eta[0, p-1])^2$, and so $p$ is a period of $\eta$. Otherwise, $\mbox{lh}(\eta)>2p$, we have that $\xi[0,2p-1]=(\eta[0,p-1])^2$ is an initial segment of the occurrence of $\eta$ at $0$. Considering the occurrence of $\eta$ at $p$, we have that $\xi[2p,3p-1]=\eta[p,2p-1]=\eta[0,p-1]$. It follows that $\xi[0,3p-1]=(\eta[0,p-1])^3$. Now, if $\mbox{lh}(\eta)<3p$, we have that $\eta$ is an initial segment of $\xi[0,3p-1]=(\eta[0,p-1])^3$, and so $p$ is a period of $\eta$. Otherwise, the lemma is proved by repeating this argument.
\end{proof}

Applying Lemma~\ref{lem:offset} to the occurrences of $\alpha_n$ at $k$ and $k'$, we obtain that $k'-k$ is a period of $\alpha_n$. Again, applying Lemma~\ref{lem:offset} to the occurrences of $\alpha_n$ at $k+\mbox{lh}(\alpha_n)+R+a$ and $k'+\mbox{lh}(\alpha_n)+R+a'$, we obtain that $k'-k+a'-a$ is also a period of $\alpha_n$. 

If either $0<k'-k\leq B$ or $0<k'-k+a'-a\leq B$, then we conclude that $\alpha_n$ has a period $p\leq B$. Otherwise, let $q=\min(k'-k,k'-k+a'-a)$ and $r=\max(k'-k,k'-k+a'-a)$. Then $0<r-q\leq B$. Also $r\leq \frac{1}{2}(\mbox{lh}(\alpha_n)+R+3B)$. Let $\gamma_n=\alpha_n[r,\mbox{lh}(\alpha_n)-1]$. Then $\mbox{lh}(\gamma_n)\geq \frac{1}{2}(\mbox{lh}(\alpha_n)-R-3B)$. Since $r$ is a period of $\alpha_n$, $\gamma_n$ is an initial segment of $\alpha_n$. Since $q$ is also a period of $\alpha_n$, we also have that $\gamma_n$ occurs at coordinate $q$ in $\alpha_n$. Applying Lemma~\ref{lem:offset} to the occurrences of $\gamma_n$ at coordinates $q$ and $r$ in $\alpha_n$, we obtain that $r-q$ is a period of $\gamma_n$. In other words, $r-q$ is a period of $\alpha_n[0,\mbox{lh}(\gamma_n)-1]$. In all cases we have that for some $l_n\geq \frac{1}{2}(\mbox{lh}(\alpha_n)-R-3B)$, $\alpha_n[0,l_n-1]$ has a period $p\leq B$.

By telescoping, we may assume that $l_n\geq \mbox{lh}(\alpha_{n-1})$, and it follows that $\alpha_{n-1}$ has a period $p\leq B$. Again by telescoping, we may assume that there is $p\leq B$ such that for all sufficiently large $n$, $\alpha_n$ has a period $p$. It then follows that $y$ has period $p$, and that $Y$ is finite, a contradiction.

We have thus proved 

\begin{theorem}\label{thm:bdd}
Let $(X, T)$ be a rank-one subshift with bounded spacer parameter and let $(Y,S)$ be a subshift. Suppose $(Y,S)$ is a topological factor of $(X, T)$. Then either $Y$ is finite or else $(X, T)$ is isomorphic to $(Y, S)$.
\end{theorem}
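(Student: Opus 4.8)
The plan is to show that, under the standing assumption that $Y$ is infinite, the factor map $\varphi$ is injective; since $X$ and $Y$ are compact and $\varphi$ is continuous, $\varphi$ is automatically a closed map, so injectivity upgrades it to a topological isomorphism. First I would record that the minimality of $(X,T)$ (which holds because the spacer parameter is bounded) transfers to its factor $(Y,S)$, so there is no proper subsystem to retreat to and every conclusion must be drawn on all of $Y$. I would then argue by contradiction, fixing distinct $x,x'\in X$ with $\varphi(x)=\varphi(x')$.

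The heart of the argument is to exploit the rigid combinatorial structure of $x$ and $x'$ coming from their unique decompositions in the form (\ref{form}) into expected occurrences of $v_n$ separated by spacers. Because $x\neq x'$, for each large $n$ there is an expected occurrence of $v_n$ in one word that is not matched in the other; a density argument on (\ref{form}) then forces two expected occurrences --- say at coordinate $k$ in $x$ and $k'$ in $x'$ --- whose offset satisfies $0<k'-k\le\frac{1}{2}(\mbox{lh}(\alpha_n)+B+R)$. After applying the sliding block code, the common image $y=\varphi(x)=\varphi(x')$ carries two heavily overlapping occurrences of the coded block $\alpha_n$, at coordinates $k$ and $k'$, and Lemma~\ref{lem:offset} converts this overlap into the statement that $k'-k$ is a period of $\alpha_n$.

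To convert a single period into a contradiction I would extract a genuinely \emph{small} period. I would examine not only the first pair of expected occurrences but the ones immediately following them; the spacers after them have lengths $a$ and $a'$, and by sliding to later or earlier occurrences along the orbit --- using that equal spacer lengths forever on both sides would already make $y$ periodic --- I would arrange $a\neq a'$. Applying Lemma~\ref{lem:offset} again to the occurrences of $\alpha_n$ just after these spacers yields a second period $k'-k+a'-a$. The two periods differ by at most $B$, so with $q=\min$ and $r=\max$ of the two we get $0<r-q\le B$, and a third application of Lemma~\ref{lem:offset} to a long common prefix/suffix $\gamma_n$ produces a period $p\le B$ of an initial segment of $\alpha_n$ whose length $l_n$ is at least about half of $\mbox{lh}(\alpha_n)$.

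Finally I would globalize this period. By telescoping I can assume $l_n\ge\mbox{lh}(\alpha_{n-1})$, so $\alpha_{n-1}$ itself has period $p\le B$; since $p$ ranges over the finite set $\{1,\dots,B\}$, a further telescoping pins down a single value of $p$ valid for all large $n$, which forces $y$ to have period $p$ and hence $Y$ to be finite, the desired contradiction. I expect the main obstacle to be precisely this passage from local to global periodicity: arranging the second expected occurrence so that the two derived periods can be combined into one small period on a sufficiently long block, and checking that the length estimates (which lose a term of order $B$ at each application of Lemma~\ref{lem:offset}) still survive telescoping to deliver a single uniform period of $y$.
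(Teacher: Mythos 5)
Your proposal is correct and follows essentially the same route as the paper's own proof: the same contradiction setup with overlapping expected occurrences of $v_n$, the same device of sliding along the decomposition to force unequal spacer lengths $a\neq a'$ (with the fallback that eternal agreement makes $y$ periodic), the same three applications of Lemma~\ref{lem:offset} to extract a period $p\le B$ of a long initial segment of $\alpha_n$, and the same two-stage telescoping to globalize that period and contradict the infinitude of $Y$.
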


\section{\label{unbdd}Unbounded Spacer Parameters}
In this section we prove 
\begin{theorem}\label{thm:ubdd}
Let $(X, T)$ be a rank-one subshift with unbounded spacer parameter and let $(Y,S)$ be a subshift. Suppose $(Y,S)$ is a topological factor of $(X, T)$. Then either $Y$ is trivial (that is, a singleton) or else $(X, T)$ is isomorphic to $(Y, S)$.
\end{theorem}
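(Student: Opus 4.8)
The plan is to follow the same high-level strategy as in the bounded case---reduce the theorem to showing that the factor map $\varphi$ is injective---but to replace the periodicity argument, which relied on the uniform spacer bound $B$, by a \emph{recognizability} argument that turns the long spacers into usable landmarks. First I would analyze the fixed point. Since $T1^{\mathbb{Z}}=1^{\mathbb{Z}}$ and $\varphi$ is equivariant, $\varphi(1^{\mathbb{Z}})$ is an $S$-fixed point of $Y$, hence a constant sequence $w^{\mathbb{Z}}$ for some letter $w$ (equivalently $1^{R+1}\in C_w$). Now assume $Y$ is not a singleton. For any $x\neq 1^{\mathbb{Z}}$ the orbit of $x$ is dense in $X$, so by continuity and surjectivity the orbit of $\varphi(x)$ is dense in $Y$; were $\varphi(x)=w^{\mathbb{Z}}$, this orbit would be the single point $w^{\mathbb{Z}}$, forcing $Y=\{w^{\mathbb{Z}}\}$. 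Hence $\varphi^{-1}(w^{\mathbb{Z}})=\{1^{\mathbb{Z}}\}$, and the same density argument rules out a finite nontrivial $Y$ (a finite $Y$ would be a single periodic orbit, which cannot contain the fixed point $w^{\mathbb{Z}}$ unless it is trivial), so $Y$ is infinite. A consequence I would record is that, for all large $n$, the block $\alpha_n$ is \emph{not} a constant run of $w$'s; otherwise the images of the $v_n$-blocks and of all spacers would be runs of $w$ and every point of $X$ would map into $\{w^{\mathbb{Z}}\}$.

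The heart of the argument is a recognizability lemma for long spacers. Because $\alpha_n$ contains a letter $\neq w$, every window of length $\mbox{lh}(\alpha_n)$ lying inside the image of an expected occurrence of $v_n$ meets a non-$w$ letter. Consequently a run of $w$'s in $y\in Y$ that is longer than $2\,\mbox{lh}(v_n)$ cannot be covered by the images of the $v_n$-blocks and must arise from a genuine spacer $1^c$ in the preimage; moreover the length of the $w$-run determines $c$ up to a bounded correction coming from the ends of the adjacent $v_n$'s, which begin and end with $0$. Since the spacer parameter is unbounded, after telescoping I may assume that at each level $n$ there is a spacer exceeding the maximal internal run $B_n$ of $v_n$, and that every $x\neq 1^{\mathbb{Z}}$ (having dense orbit) contains arbitrarily long $1$-runs infinitely often on both sides. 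Thus the long spacers of the level-$n$ decomposition of $x$ are exactly the long maximal $1$-runs, and they are read off directly from the long $w$-runs of $y$.

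The final step is to upgrade this to injectivity. Suppose $\varphi(x)=\varphi(x')=y$ with $x,x'\neq 1^{\mathbb{Z}}$. By the recognizability lemma the long $w$-runs of $y$ recover the same long spacers, at the same coordinates and of the same lengths, in both $x$ and $x'$: the long spacers are \emph{aligned}, and they cut $y$ into finite pieces. On each piece the preimages are concatenations $v_n1^{d_1}v_n\cdots v_n$ with only short spacers $d_i\leq B_n$, anchored at the same endpoints and with the same $\varphi$-image. To force these two parsings to coincide I would induct downward on the level: inside such a piece the level-$(n-1)$ spacers exceeding $B_{n-1}$ are themselves recognizable and hence aligned, refining the alignment, and iterating down to $v_0=0$---whose level-$0$ decomposition is simply the bit-pattern of $x$ itself---pins the entire decomposition of $y$. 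Then $x$ and $x'$ have the same level-$n$ decomposition for every $n$, so $x=x'$. Combined with $\varphi^{-1}(w^{\mathbb{Z}})=\{1^{\mathbb{Z}}\}$, this makes $\varphi$ a continuous equivariant bijection of compact spaces, hence an isomorphism.

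I expect the main obstacle to be precisely this last reduction. The bounded-case periodicity argument does not transfer verbatim, because the internal-run bound $B_n$ grows with $n$: a misalignment inside a short-spacer piece only yields a period $p\leq B_n$ of $\alpha_n$ rather than a \emph{fixed} period, and this cannot be telescoped into a global period for $y$. The real work lies in making the downward recognizability rigorous---in particular, in showing that the short spacers which do \emph{not} exceed the internal runs below them are nonetheless determined by the already-recovered lower-level structure (a transport of Proposition 2.28 of \cite{GH} through $\varphi$, justified by the non-$w$ letter in each $\alpha_n$). Establishing this clean recognizability statement, rather than the global periodicity dichotomy of the bounded case, is the crux.
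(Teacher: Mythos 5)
Your proposal reproduces, correctly, roughly the first half of the paper's argument: the normalization $\varphi(1^{\mathbb{Z}})=w^{\mathbb{Z}}$, the conclusion that $Y$ nontrivial forces $Y$ infinite and the blocks $\alpha_n=\varphi(v_n)$ eventually non-constant, and the recognizability of long spacers through the sliding block code (an occurrence of $v_11^lv_1$ with $l$ large produces a long run of the fixed-point symbol in $y$, which in any preimage can only be induced by a spacer, of exactly matching position and length) -- this last point is precisely how the paper verifies the hypothesis of its key lemma. The genuine gap is the final implication, ``aligned long spacers $\Rightarrow x=x'$.'' Your proposed route is a downward induction through the levels, recovering shorter and shorter spacers inside the finite pieces cut out by the long ones; but, as you yourself concede, the crucial case -- spacers that do \emph{not} exceed the internal $1$-runs of the blocks below them, and hence are invisible to your recognizability lemma -- is left unresolved. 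Nothing in the sketch supplies that argument, and the bound $B_n$ growing with $n$ (which you correctly note kills the bounded-case periodicity trick) is exactly why the induction does not close. So the proposal is an accurate diagnosis of the difficulty, not a proof of the theorem.

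The paper closes this gap by a different and much shorter mechanism (Lemma~\ref{lem:spacer}), which needs no downward induction and never recovers short spacers individually. Telescope so that $v_2$ contains a level-$1$ spacer of length $\geq L$, and index the spacers of the level-$1$ decomposition of $x$ by $j\in\mathbb{Z}$. The spacers of length $\geq L$ then fall into congruence classes mod $q_1$: the classes $j\equiv j_1,\dots,j_M\ (\mathrm{mod}\ q_1)$ are \emph{periodic} (the same fixed lengths $a_{1,j_i}\geq L$ recur inside every expected occurrence of $v_2$), while the class $j\equiv 0\ (\mathrm{mod}\ q_1)$ -- the spacers separating expected occurrences of $v_2$ -- has unbounded lengths and is aperiodic. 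Matching the long spacers of $x$ to those of $x'$ position-by-position and length-by-length forces the periodic classes to correspond, and this pins the expected occurrences of $v_2$ in $x$ and $x'$ to identical coordinates; by uniqueness of the rank-one decomposition (Proposition 2.28 of \cite{GH}) this already gives $x=x'$, since the gaps between aligned $v_2$-occurrences are filled by $1$s of forced length. In other words, one only ever needs to pin down a \emph{single} level's skeleton, not the whole hierarchy. Separately, note that your blanket claim that every $x\neq 1^{\mathbb{Z}}$ contains arbitrarily long $1$-runs ``infinitely often on both sides'' fails for the elements $z_K=1^{-\mathbb{N}}V$ and $z^*_K=V^*1^{\mathbb{N}}$, whose images are not cut into finite pieces by long spacers; the paper handles these separately, proving injectivity on that set via the first/last occurrence of a non-constant symbol in the image and showing their images are disjoint from the images of the remaining points.
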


The rest of this section is devoted to a proof of Theorem~\ref{thm:ubdd}. Throughout this section we assume $(X, T)$ is a rank-one subshift of $2^\mathbb{Z}$ with unbounded spacer parameter. We continue to use $(q_n), (a_{n,i})$, and $(v_n)$, respectively, to denote the cutting parameter, the space parameter, and the induced rank-one sequence.

We first analyze the forms of elements of $X$ as bi-infinite words. In particular, we identify all elements of $X$ with an infinite string of 1s. First, $1^\mathbb{Z}\in X$ and is a unique fixed point. Next, we recall Lemma 3.10 of \cite{GH}, which states that, given any $K\in\mathbb{Z}$, there is a unique $z\in X$ so that $z$ has a first occurrence of $0$ at coordinate $K$. Recall that $V=\lim_{n\to\infty} v_n$ is the infinite rank-one word which has each $v_n$ as its initial segment. For each $K\in \mathbb{Z}$, define
$$ z_K(k)=\left\{\begin{array}{ll}1, & \mbox{ if $k<K$,}\\V(k-K), &\mbox{ if $k\geq K$.} \end{array}\right. $$
Then $z_K$ is of the form $1^{-\mathbb{N}}V$ with the occurrence of $V$ starting at coordinate $K$. It is easy to verify that $z_K\in X$, and therefore it is the unique $z\in X$ with a first occurrence of $0$ at coordinate $K$. 

Similarly, Lemma 3.10 of \cite{GH} also gives that, for any $K\in\mathbb{Z}$, there is a unique $z\in X$ so that $z$ has a last occurrence of $0$ at coordinate $K$. To identify this element, we consider a dual infinite rank-one word $V^*$ defined in \cite{GZ}. To define $V^*$, note that each $v_n$ is also an end segment of $v_{n+1}$. This allows us to take a dual limit and obtain $V^*$, where $V^*$ has all $v_n$ as its end segment. More formally, we can define $V^*$ as an infinite word with domain $-\mathbb{N}$, where for each $k\in \mathbb{N}$, 
$$ V^*(-k)=v_n(\mbox{lh}(v_n)-k-1) $$
for any $n$ such that $\mbox{lh}(v_n)>k$. Then for any $K\in \mathbb{Z}$, define
$$ z^*_K(k)=\left\{\begin{array}{ll}1, & \mbox{ if $k>K$,}\\V^*(k-K), &\mbox{ if $k\leq K$.} \end{array}\right. $$
Then $z^*_K$ is the unique $z\in X$ with the last occurrence of $0$ at coordinate $K$. Each $z^*_K$ is of the form $V^*1^\mathbb{N}$.

In summary, the set
$$ S=\{ 1^\mathbb{Z}, z_K, z^*_K\,:\, K\in \mathbb{Z}\} $$
consists precisely of all elements of $X$ which contain an infinite string of 1s. If $x\in X-S$, then again by Proposition 2.28 of \cite{GH}, for any $n\geq 1$, $x$ can be written uniquely as
\begin{equation*}  \cdots\cdots v_n 1^{c_{-1}}v_n 1^{c_0} v_n 1^{c_1}v_n \cdots\cdots 
\end{equation*}
where $c_i\geq 0$ for $i\in\mathbb{Z}$. Once again these occurrences of $v_n$ are called expected occurrences of $v_n$. 

Let $(Y, S)$ be a subshift of $b^{\mathbb{Z}}$ for some integer $b\geq 2$. Assume that $Y$ is not a singleton. Assume $\varphi$ is a factor map from $(X, T)$ onto $(Y, S)$. We will show that $\varphi$ is a topological isomorphism. Again, it suffices to show that $\varphi$ is one-to-one.

Let $\{C_0,\dots, C_{b-1}\}$ be the sliding block code corresponding to $\varphi$. Assume that the sliding block code has window size $R+1$, that is, for any $x\in X$, $k\in\mathbb{Z}$, and $j\in[0,b-1]$, 
$$ \varphi(x)=j\iff x[k,k+R]\in C_j. $$

Applying the sliding block code to $1^\mathbb{Z}$, we obtain a constant element of $Y$ as $\varphi(1^\mathbb{Z})$. Without loss of generality, we assume $\varphi(1^\mathbb{Z})=1^\mathbb{Z}$. Thus, for any $l>R$, an application of the sliding block code to the string $1^l$ results in the string $1^{l-R}$.

By telescoping, we may assume $\mbox{lh}(v_1)\gg R$. For each $n\geq 1$, let $\alpha_n$ be the block of length $\mbox{lh}(v_n)-R$ obtained from the application of the sliding block code to $v_n$, that is, for $k\in [0, \mbox{lh}(v_n)-R-1]$,
$$ \alpha_n(k)=j\iff v_n[k,k+R]\in C_j. $$
It is clear that each $\alpha_n$ is an initial segment as well as an end segment of $\alpha_{n+1}$. We let $W$ be the infinite word taken as a limit of $\alpha_n$, that is, so that every $\alpha_n$ is an initial segment of $W$. Similarly, let $W^*$ be the dual limit of $\alpha_n$, that is, $W^*$ is an infinite word with domain $-\mathbb{N}$ so that every $\alpha_n$ is an end segment of $W^*$. Then each $\varphi(z_K)$ is of the form $1^{-\mathbb{N}}W$ and each $\varphi(z^*_K)$ is of the form $W^*1^\mathbb{N}$.

Note that for every $x\in X$, every finite subword of $x$ is a subword of $v_n$ for some $n\geq 1$. It follows that, for every $y\in Y$, every finite subword of $y$ is a subword of $\alpha_n$ for some $n\geq 1$. This implies that $\alpha_n$ cannot be constant for all $n\geq 1$, or else $Y$ would be a singleton. By telescoping, we may assume $\alpha_1$ is not constant. Without loss of generality, we may assume $\alpha_1$ contains an occurrence of $0$. 

Let $i_0$ be the first occurrence of 0 in $\alpha_1$ and $i_1$ be the last occurrence of $0$ in $\alpha_1$. Then $\varphi(z_K)$ has its first occurrence of $0$ at $K+i_0$ and $\varphi(z^*_K)$ has its last occurrence of $0$ at $K-\mbox{lh}(\alpha_1)-i_1-1$. This implies that $\varphi\upharpoonright S$ is one-to-one. For any $x\in X-S$, $\varphi(x)$ contains infinitely many $0$s in both directions. Thus $\varphi(S)\cap \varphi(X-S)=\emptyset$. To finish our proof, it remains to show that $\varphi\upharpoonright (X-S)$ is one-to-one.

For this we use the following lemma.

\begin{lemma}\label{lem:spacer} Let $L\geq \mbox{lh}(v_1)$. Let $x, x'\in X-S$ be such that for any $l\geq L$, any spacer of length $l$ in between expected occurrences of $v_1$ in $x$ occurs at the same position as a spacer of the same length in between expected occurrences of $v_1$ in $x'$. Then $x=x'$.
\end{lemma} 

\begin{proof}
First note that, since $X$ has unbounded spacer parameter, there are spacers of length $\geq L$ in between expected occurrences of $v_1$ in $x$. By telescoping, we can assume such spacers occur in $v_2$, that is, for some $1\leq j<q_1$, $a_{1,j}\geq L$. Let $j_1< \dots< j_M$ enumerate all $j\in[1,q_1-1]$ such that $a_{1,j}\geq L$. Note that $x$ can be uniquely written as a concatenation of expected occurrences of $v_2$ with spacers in between:
\begin{equation}\label{eq:xv2} \cdots\cdots v_2 1^{t_{-1}}v_2 1^{t_0} v_2 1^{t_1}v_2 \cdots\cdots 
\end{equation} 
where $t_i\geq 0$ for $i\in\mathbb{Z}$. Expanding the expression using
$$ v_2=v_11^{a_{1,1}}v_1\dots v_11^{a_{1,q_1-1}}v_1, $$
we obtain the unique expression of $x$ as a concatenation of expected occurrences of $v_1$ with spacers in between, as follows:
\begin{equation}\label{eq:x} \cdots\cdots v_1 1^{c_{-1}}v_1 1^{c_0} v_1 1^{c_1}v_1 \cdots\cdots 
\end{equation}
where $c_j\geq 0$ for all $j\in \mathbb{Z}$. Without loss of generality, assume that $1^{c_0}$ in the above expression (\ref{eq:x}) is a spacer in between expected occurrences of $v_2$ in $x$. Then we have, for all $1\leq j<q_1$, $c_j=a_{1, j}$. Furthermore, we have that for any $k\in \mathbb{Z}$ and $1\leq j<q_1$, 
$$ c_{j+kq_1}=a_{1,j}. $$
Thus, all spacers $1^{c_j}$ in the above expression (\ref{eq:x}), except for those corresponding to $j\equiv 0\  (\mbox{mod}\ q_1)$, exhibit a periodic structure with period $q_1$. In contrast, the spacers $1^{c_j}$ for $j\equiv 0\  (\mbox{mod}\ q_1)$ do not exhibit a periodic structure with period $q_1$, since their lenghs are unbounded. Among all the spacers demonstrated in (\ref{eq:x}), the ones with lengths $\geq L$ correspond to $1^{c_j}$ for all $j\equiv j_1, \dots, j_M\ (\mbox{mod}\ q_1)$ and some $j\equiv 0\  (\mbox{mod}\ q_1)$. 

Now all these observations about $x$ hold similarly for $x'$. In particular, if we similarly express $x'$ as
\begin{equation}\label{eq:x'} \cdots\cdots v_1 1^{c'_{-1}}v_1 1^{c'_0} v_1 1^{c'_1}v_1 \cdots\cdots 
\end{equation}
with $c'_j\geq 0$ for $j\in\mathbb{Z}$, there are also exactly $M$ many periodic classes of spacers $1^{c'_j}$ with $c'_j\geq L$ and one aperiodic class of spacers. 

By our assumption, any spacer of length $\geq L$ in (\ref{eq:x}) occurs at the same position as a spacer of the same length in (\ref{eq:x'}). Thus the spacers in (\ref{eq:x}) corresponding to $j\equiv j_1, \dots, j_M\ (\mbox{mod}\ q_1)$ must align with the corresponding spacers in (\ref{eq:x'}). It follows that any expected occurrence of $v_2$ in $x$ occurs at the same position as an expected occurrence of $v_2$ in $x'$. By Proposition 2.28 of \cite{GH}, we conclude that $x=x'$.
\end{proof}

We are now ready to show that $\varphi\upharpoonright (X-S)$ is one-to-one. Let $x, x'\in X-S$ and assume that $\varphi(x)=\varphi(x')$. Consider a subword of $x$ of the form $v_11^cv_1$ where $c\geq R$. An application of the sliding block code to $v_11^cv_1$ results in a word of the form
$$ \alpha_1\eta 1^{c-R}\epsilon\alpha_1 $$
for some words $\eta$ and $\epsilon$ with $\mbox{lh}(\eta)=\mbox{lh}(\epsilon)=R$. Therefore, whenever $v_11^cv_1$ occurs in $x$, there is an occurrence of
$\alpha_1\eta 1^{c-R}\epsilon\alpha_1$ in $\varphi(x)$ at the same position. 

Let $L=\mbox{lh}(v)+2R$. We now verify that the condition for Lemma~\ref{lem:spacer} holds for $x$ and $x'$. For this, suppose $l\geq L$ and $v_11^lv_1$ occurs in $x$ at position $k$, with both ocurrences of $v_1$ expected. As discussed above, there is an occurrence of $\beta=\alpha_1\eta 1^{l-R}\epsilon\alpha_1$ in $\varphi(x)$ at position $k$. Since $\varphi(x)=\varphi(x')$, we get an occurrence of $\beta$ in $\varphi(x')$ at the same position. Note that there are at least $\mbox{lh}(v_1)+R$ many consecutive 1s in $\beta$. It follows that the occurrence of $\beta$ in $\varphi(x')$ must be induced by an occurrence of $v_11^c v_1$ in $x'$ where $c\geq R$. To see this, assume $c<R$ and that an occurrence of $v_11^cv_1$ induces $\beta$. Then an application of the sliding block code gives an occurrence of $\alpha_1$ at each of the occurrences of $v_1$. Since $\alpha_1$ contains at least one occurrence of $0$, the number of consecutive 1s is bounded by $\mbox{lh}(v_1)+c<\mbox{lh}(v_1)+R$, a contradiction.

Since $c\geq R$, an application of the sliding block code yields $\alpha_1\eta 1^{c-R}\epsilon\alpha_1$. We claim $c=l$. To see this, we consider four cases depending on whether $\eta=1^R$ and whether $\epsilon=1^R$. We only argue for the case in which both $\eta\neq 1^R$ and $\epsilon\neq 1^R$, and the other cases are similar. In this case both $\eta$ and $\epsilon$ contain occurrences of symbols other than 1. It is clear that the number of consecutive 1s in $\eta 1^{l-R} \epsilon$ and in $\eta 1^{c-R}\epsilon$ would be different if $l\neq c$. 

We have seen that the occurrence of $\beta$ in $\varphi(x')$ is uniquely determined by an occurrence of $v_11^lv_1$ in $x'$. Thus, for every occurrence of a spacer of length $l\geq L$ in $x$, there is an occurrence of a spacer of the same length at the same position in $x'$. By Lemma~\ref{lem:spacer}, $x=x'$.

\end{document}